\let\@fnsymbol\@arabic
\theoremstyle{plain}
\newtheorem{theorem}{\bf Theorem}[section]
\newtheorem{lemma}[theorem]{Lemma}
\newtheorem{proposition}[theorem]{Proposition}
\theoremstyle{definition}
\newtheorem{definition}[theorem]{Definition}
\newtheorem{remark}[theorem]{Remark}
\newtheorem*{theorem*}{\bf Theorem}
  \newcommand{\ini}{\operatorname{in} }
\newcommand{\NN}{\mathbb{N}}
\newcommand{\ZZ}{\mathbb{Z}}
\newcommand{\FF}{\mathbb{F}}
\newcommand{\Tor}{\operatorname{Tor} }
\newcommand{\mm}{\mathfrak{m}}
\newcommand{\reg}{\operatorname{reg} }
\definecolor{mypink}{RGB}{215, 5, 234}
 \newcommand{\Rees}{\mathrm{Rees}}  
 \newcommand{\grade}{\mathrm{grade}}
\begin{document}

\title{Castelnuovo-Mumford regularity and powers}

\author{Winfried Bruns}
\address{Universit\"at Osnabr\"uck, Institut f\"ur Mathematik, 49069 Osnabr\"uck, Germany}
\email{wbruns@uos.de}
\author{Aldo Conca}
\email{conca@dima.unige.it}
\address{Dipartimento di Matematica, Universit\'a di Genova, Italy}
\author{Matteo Varbaro} 
\email{varbaro@dima.unige.it}
\address{Dipartimento di Matematica, Universit\'a di Genova, Italy} 
 \thanks{AC and MV  were  partially supported by INdAM-GNSAGA} 
  \date{}
 
\dedicatory{Dedicated to David Eisenbud on the occasion of his seventy-fifth birthday.}

\maketitle

\begin{abstract} This note has two goals.  The first is  to give a short and self contained introduction to the Castelnuovo-Mumford regularity for standard graded rings $R=\bigoplus_{i\in \NN}  R_i$ over general base rings $R_0$.  The second is to present a simple and concise proof of a  classical result due to Cutkosky, Herzog and Trung  and,  independently,  to  Kodiyalam asserting that the regularity of powers $I^v$ of an homogeneous ideal $I$ of $R$ is eventually a linear function in $v$. Finally we show  how the flexibility  of the definition of the Castelnuovo-Mumford regularity over general base rings can be used to give a simple proof of a result proved by the authors in \cite{BCV}. 
\end{abstract}

\section{Castelnuovo-Mumford regularity over general base rings} 
\label{CM}
Castelnuovo-Mumford regularity was introduced in the early eighties of the twentieth century by Eisenbud and Goto in \cite{EG} and by Ooishi \cite{O} 
as an algebraic counterpart of  the notion of regularity  for coherent sheaves on projective spaces discussed by Mumford in \cite{M}. 

One of the most important features of Castelnuovo-Mumford regularity is that it can be equivalently defined in terms of  (and hence  it bounds)  the vanishing of local cohomology modules,   the vanishing of  Koszul homology  modules and 
 the vanishing of syzygies.  

This triple nature of Castelnuovo-Mumford regularity is usually stated for graded rings over base fields, but indeed it holds in general as we will show in this section.   

Let $R=\bigoplus_{i\in \NN}  R_i$ be a $\NN$-graded ring with $R_0$  commutative and Noetherian. We assume that  $R$ is  standard graded, i.e., it is generated as an $R_0$-algebra by finitely many elements $x_1,\dots, x_n$ of degree $1$.  
Let $S=R_0[X_1,\dots, X_n]$  with $\NN$-graded structure induced by the assignment $\deg X_i=1$. 
The $R_0$-algebra map $S\to R$  sending $X_i$ to $x_i$ induces an $S$-module structure on $R$ and hence on every $R$-module.

Let $M=\bigoplus_{i\in \ZZ}  M_i$ be  a finitely generated graded $R$-module. Given $a\in \ZZ$ we will denote by  $M(a)$ the module that it is obtained from $M$ by shifting the degrees by $a$, i.e. $M(a)_i=M_{i+a}$.  

The Castelnuovo-Mumford regularity of $M$ is defined in terms of local cohomology modules $H^i_{Q_R}(M)$ with support on 
$$Q_R=R_+=(x_1,\dots, x_n).$$ 
For general properties of local cohomology modules we refer the readers to 
\cite{BS, BH, E}.  
In our setting the module $H^i_{Q_R}(M)$ is $\ZZ$-graded and its homogeneous component $H^i_{Q_R}(M)_j$ of degree $j\in \ZZ$ vanishes for large $j$.  The Castelnuovo-Mumford regularity of $M$  or, simply, the regularity of $M$ is defined as 
$$\reg(M)=\max\{  i+j : H_{Q_R}^i(M)_j\neq 0\}.$$
We may as well consider $M$ as an $S$-module by means of the map $S\to R$  and local cohomology supported on
$$Q_S=(X_1,\dots, X_n).$$
Since $H_{Q_S}^i(M)=H_{Q_R}^i(M)$ the resulting regularity is the same.  

 Here we list some simple properties of regularity that we will freely use.

\begin{itemize} 
\item[(1)]  $\reg(M(-a))=\reg(M)+a$. 
\item[(2)] $\reg(S)=0$ because $H_{Q_S}^i(S)=0$ for $i\neq n$ and $H_{Q_S}^n(S)=(X_1\cdots X_n)^{-1}R_0[X^{-1}_1,\dots, X^{-1}_n]$.
\item[(3)] If  $0\to N\to M\to L\to 0$ 
is a short exact sequence of finitely generated graded $R$-modules with maps of degree $0$ then : 
$$\begin{array}{rl}
\reg(N)& \leq \max\{ \reg(M), \reg(L)+1 \}, \\
\reg(M)& \leq \max\{ \reg(L), \reg(N)  \}, \\
\reg(L)& \leq \max\{ \reg(M), \reg(N)-1\}. 
\end{array} 
$$
\end{itemize} 

A minimal set of generators of $M$ is, by definition,  a set of generators that is minimal with respect to inclusion. The number of elements in a  minimal set of generators is not uniquely determined, but the set of the degrees of the elements in a minimal set of homogeneous generators of $M$ is uniquely determined because it coincides with the set of  $i\in \ZZ$  such that $[M/Q_RM]_i\neq 0$.  So we have a well defined notion of largest degree of a minimal generator of $M$ that we denote by $t_0(M)$, that is,
$$t_0(M)=\max\{ i\in \ZZ : [ M/Q_RM ]_i\neq 0\}$$
if $M\neq 0$. 
We use $t_0$ because  $M/Q_RM\simeq \Tor^R_0(M, R_0)=\Tor^S_0(M, R_0)$.  

The following result establishes the crucial link between the regularity and the degree of generators of a module. It appears in \cite[Thm.2]{O},  where it is attributed to Mumford,  and it appears also in \cite[Thm.16.3.1] {BS}. 

\begin{lemma}
\label{mainLemma}
$t_0(M)\leq \reg(M)$. 
\end{lemma}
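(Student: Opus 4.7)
I would prove this by induction on the number $n$ of polynomial variables. The base case $n = 0$ is immediate: with $R = R_0$ and $Q_R = 0$, we have $M/Q_R M = M$ and $H^0_{Q_R}(M) = M$ with vanishing higher local cohomology, so both $t_0(M)$ and $\reg(M)$ reduce to the largest degree in which $M$ is nonzero.

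For the inductive step, I would pass to $\overline{M} := M/X_n M$, regarded as a module over $\overline{S} := R_0[X_1,\ldots,X_{n-1}]$. Two natural identifications do most of the bookkeeping. First, the canonical isomorphism $\overline{M}/Q_{\overline{S}}\overline{M} \simeq M/Q_S M$ shows that $t_0(M) = t_0(\overline{M})$. Second, since $X_n$ annihilates $\overline{M}$, local cohomology gives the same answer computed over $S$ or over $\overline{S}$, so $\reg_S(\overline{M}) = \reg_{\overline{S}}(\overline{M})$. Feeding $\overline{M}$ into the inductive hypothesis then reduces the lemma to the inequality $\reg_S(\overline{M}) \leq \reg_S(M)$.

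To establish this comparison I would split the four-term exact sequence
$$0 \to K(-1) \to M(-1) \xrightarrow{X_n} M \to \overline{M} \to 0,$$
where $K := 0:_M X_n$, into two short exact sequences, use the isomorphism $X_n M \simeq (M/K)(-1)$, and iterate property (3) together with property (1). This quickly produces an estimate of the shape $\reg(\overline{M}) \leq \max\{\reg(M),\, \reg(K) - 1\}$, reducing the task to the bound $\reg(K) \leq \reg(M) + 1$.

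The main obstacle is precisely this last regularity bound for $K = 0 :_M X_n$: a priori the $X_n$-torsion submodule could cause a jump in regularity, and the direct sub-quotient rules tend to close up into circular estimates. I would close the gap by exploiting that $K$ is automatically an $\overline{S}$-module (as $X_n K = 0$) and by iterating to reduce to the case where $X_n$ is a non-zerodivisor on $M$. In that setting the four-term sequence degenerates to an honest short exact sequence, and the long exact sequence of local cohomology immediately delivers $\reg(\overline{M}) \leq \reg(M)$, completing the induction.
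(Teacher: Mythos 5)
Your reduction bookkeeping is fine up to the point you yourself flag: the identifications $t_0(M)=t_0(\overline M)$ and $\reg_S(\overline M)=\reg_{\overline S}(\overline M)$ are correct, and the exact-sequence computation correctly shows that everything hinges on $\reg(0:_M X_n)\leq \reg(M)+1$ for the \emph{fixed} variable $X_n$. But the final paragraph does not close this gap. ``Iterating to reduce to the case where $X_n$ is a non-zerodivisor'' means replacing $M$ by $M/(0:_M X_n^{\infty})$, and to make that replacement you must bound the regularity of $0:_M X_n^{\infty}=H^0_{(X_n)}(M)$ and of the quotient in terms of $\reg(M)$. That is exactly the same problem relocated: $H^0_{(X_n)}(M)$ is supported on the hyperplane $X_n=0$, not on $V(Q_S)$, so unlike $H^0_{Q_S}(M)$ it need not have finite length and its regularity is not read off from $\reg(M)$; the observation that $X_n$ kills it (so it is an $\overline S$-module) gives you the inductive statement $t_0\leq\reg$ for it, not the bound $\reg\leq\reg(M)+1$ that you actually need. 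The circularity you correctly diagnosed in the sub-quotient estimates is therefore not broken. Note also that the intermediate claim $\reg(M/\ell M)\leq\reg(M)$ is a standard fact only for $\ell$ filter-regular (i.e.\ $0:_M\ell$ of finite length) or $M$-regular; for an arbitrary prescribed linear form such as $X_n$, which may lie in an associated prime of $M$ of large dimension, it is not something you can assert without proof.

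The paper's proof is arranged precisely to avoid this trap. It first localizes so that $R_0$ is local with infinite residue field, then splits off $H^0_{Q_S}(M)$ --- whose regularity trivially dominates its top degree, hence its $t_0$ --- reducing to $\grade(Q_S,M)>0$, and only then chooses a \emph{general} linear form $L\in S_1\setminus \mm S_1$ that is a non-zerodivisor on $M$ (this is where the infinite residue field is used, via prime avoidance). With $L$ a non-zerodivisor the four-term sequence collapses to the short exact sequence $0\to M(-1)\to M\to M/LM\to 0$ and $\reg(M/LM)\leq\reg(M)$ is immediate. The existence of such an $L$ cannot be arranged for a prescribed variable, so if you want to run your induction you should import these three reduction steps (localization, passage to an infinite residue field, and quotienting by $H^0_{Q_S}(M)$) rather than insist on $X_n$.
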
 

\begin{proof} Let $v=t_0(M)$. Then the $R_0$-module   $[M/Q_SM]_v$ is non-zero. Therefore there is a prime ideal $P$ of $R_0$ such that $[M/Q_SM]_v$  localized at $P$ is non-zero. In other words,  the localization $M'$ of $M$ at the multiplicative set $R_0\setminus P$  is a graded module over $(R_0)_P[X_1,\dots, X_n]$ with $t_0(M')=t_0(M)$. Since $\reg(M')\leq \reg(M)$ we may assume right away that $R_0$ is local with maximal ideal, say, $\mm$. Similarly we may also assume that the residue field of $R_0$ is infinite. 
If $M=H_{Q_S}^0(M)$, the assertion is obvious.  If $M\neq H_{Q_S}^0(M)$ then set $M'=M/H_{Q_S}^0(M)$. Clearly $t_0(H_{Q_S}^0(M))\leq \reg(M)$ and $\reg(M')\leq \reg(M)$. Since $t_0(M)=\max\{t_0(M'),t_0(H_{Q_S}^0(M))\}$ it is enough to prove the statement for $M'$. That is to say, we may assume that $\grade(Q_S,M)>0$. Because the residue field of $R_0$ is infinite, there exists $L\in S_1\setminus \mm S_1$ such that $L$ is a non-zero-divisor on $M$. By a change of coordinates we may assume that $L=X_n$. 
The short exact sequence 
$$0\to M(-1)\to M\to \overline{M}=M/(X_n)M\to 0$$
implies that $\reg(\overline{M})\leq \reg(M)$ (it is actually equal but we do not need it). As $\overline{M}$ is a finitely generated graded module over $R_0[X_1,\dots,X_{n-1}]$, we may assume, by induction on the number of variables, that it is generated in degree $\leq \reg(M)$. But then it follows easily that also $M$ is generated in degree $\leq \reg(M)$.
\end{proof} 
 
 Next  we consider the (graded) Koszul homology $H(Q_R,M)=H(Q_S,M)$  and set:
$$\reg_1(M)=\max\{ j-i :  H_i(Q_R,M)_j\neq 0  \}.$$
In this case, since $H_0(Q_R,M)\cong M/Q_RM$, the assertion  
$$t_0(M)\leq \reg_1(M)$$
 is obvious.
Now, let 
$$\FF: \dots  \to F_c \to F_{c-1} \to \dots \to F_0 \to 0$$
be a  graded  $S$-free resolution of $M$, i.e.,  each $F_i$ is a  graded and $S$-free of finite rank, the maps have degree $0$ and $H_i(F)=0$ for all $i$ with the exception of $H_0(\FF)\simeq M$. 
We say that $\FF$ is minimal if  a basis of $F_0$  maps to a minimal set of homogeneous generators of $M$,  a basis of $F_1$  maps to  a minimal set of homogeneous generators of the kernel of $F_0\to M$ and for $i\geq 2$ a basis of   $F_i$ maps to  a minimal set of homogeneous generators of the kernel of $F_{i-1}\to F_{i-2}$.  

If $R_0$ is a field then a (finite) minimal $S$-free resolution always exists and it is unique up to an isomorphism of complexes. 
For general $R_0$, it is still true that  every module has a minimal free graded resolution but it is, in general, not  finite and furthermore  it is not  unique up to an isomorphism of complexes. 

Given a minimal graded $S$-free resolution $\FF$ of $M$ we set: 
$$\reg_2(\FF)=\max\{ t_0(F_i)-i \  :  \  i=0,\dots, n-\grade(Q_S,M)  \}$$
and 
$$\reg_3(\FF)=\max\{ t_0(F_i)-i : i\in \NN\}.$$
Obviously we have $t_0(M)\leq \reg_2(\FF)\leq \reg_3(\FF)$. 
We are ready to establish the following fundamental result: 

\begin{theorem} \label{t:main}
With the notation above and for every minimal $S$-free resolution $\FF$ of $M$,  we have: 
$$\reg(M)=\reg_1(M)=\reg_2(\FF)=\reg_3(\FF).$$
\end{theorem}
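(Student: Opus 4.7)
The plan is to close the cycle of equalities in three steps: (a) $\reg_3(\FF)\le\reg(M)$ by inductively bounding the regularities of the syzygies of $\FF$ via Lemma~\ref{mainLemma} and property~(3); (b) $\reg(M)\le\reg_2(\FF)$ via the hypercohomology spectral sequence of $\FF$, combined with the vanishing $H^i_{Q_S}(M)=0$ for $i<\grade(Q_S,M)$; and (c) $\reg_1(M)=\reg_3(\FF)$ via the Koszul-complex computation of $\Tor^S_\bullet(M,R_0)$. Combined with the trivial $\reg_2(\FF)\le\reg_3(\FF)$, this closes the chain. As in the proof of Lemma~\ref{mainLemma}, I may localize and assume $R_0$ is local with maximal ideal $\mm$ and infinite residue field $k$, so that minimality forces the differentials of $\FF$ to have entries in $\mm+Q_S$.

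For step~(a), set $\Omega_{-1}=M$ and let $\Omega_{i-1}$ denote the $(i-1)$st syzygy in $\FF$. By induction on $i\ge 0$ I claim $\reg(\Omega_{i-1})\le\reg(M)+i$: the base is tautological, and for the step the short exact sequence $0\to\Omega_{i-1}\to F_{i-1}\to\Omega_{i-2}\to 0$ together with property~(3) gives
$$\reg(\Omega_{i-1})\le\max\{\reg(F_{i-1}),\reg(\Omega_{i-2})+1\}.$$
Since $F_{i-1}$ is a direct sum of shifts of $S$, $\reg(F_{i-1})=t_0(F_{i-1})$, and by minimality $t_0(F_{i-1})=t_0(\Omega_{i-2})\le\reg(\Omega_{i-2})\le\reg(M)+(i-1)$ (using Lemma~\ref{mainLemma} and induction). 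Hence $\reg(\Omega_{i-1})\le\reg(M)+i$, so $t_0(F_i)-i\le\reg(M)$ and $\reg_3(\FF)\le\reg(M)$.

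For step~(b), apply $H^\bullet_{Q_S}(-)$ to $\FF$. Because each $F_q$ is $S$-free and $H^p_{Q_S}(S)=0$ for $p\ne n$, the associated hypercohomology spectral sequence has $E_1$ page concentrated on the single row $p=n$, namely $E_1^{n,-q}=H^n_{Q_S}(F_q)$, contributing to $H^{n-q}_{Q_S}(M)$. The top nonzero degree of $H^n_{Q_S}(F_q)$ is $t_0(F_q)-n$, so any nonzero class in $H^i_{Q_S}(M)_j$ forces $i+j\le t_0(F_{n-i})-(n-i)$. Since $H^i_{Q_S}(M)$ vanishes for $i<\grade(Q_S,M)$, the relevant indices satisfy $0\le n-i\le n-\grade(Q_S,M)$, yielding $i+j\le\reg_2(\FF)$. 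For step~(c), the Koszul complex on $X_1,\dots,X_n$ is a free $S$-resolution of $R_0=S/Q_S$, whence $H_i(Q_S,M)\cong\Tor^S_i(M,R_0)=H_i(\FF\otimes_S R_0)\subseteq F_i\otimes_S R_0$, which has top degree $t_0(F_i)$, giving $\reg_1(M)\le\reg_3(\FF)$. For the reverse, pick $i^*$ realizing $\reg_3(\FF)=t_0(F_{i^*})-i^*$; then $t_0(F_{i^*})>t_0(F_{i^*-1})$ (else maximality is violated), so $[F_{i^*-1}\otimes_S R_0]_{t_0(F_{i^*})}=0$ and $d_{i^*}$ vanishes on that degree, while the image of $d_{i^*+1}$ there lies in $\mm\cdot(F_{i^*}\otimes_S R_0)$; Nakayama then produces a nonzero class in $[\Tor^S_{i^*}(M,R_0)]_{t_0(F_{i^*})}$, yielding $\reg_1(M)\ge\reg_3(\FF)$.

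The main technical obstacle is running the hypercohomology spectral sequence in step~(b) cleanly when $\FF$ is infinite over non-regular $R_0$; one must verify convergence to $H^\bullet_{Q_S}(M)$ and the row-concentration of $E_1$. An alternative that sidesteps the spectral sequence is to iterate the long exact cohomology sequences of $0\to\Omega_q\to F_q\to\Omega_{q-1}\to 0$, exploiting the dimension-shifting isomorphism $H^i_{Q_S}(\Omega_q)\cong H^{i-1}_{Q_S}(\Omega_{q-1})$ valid for $i, i-1 \ne n$ to reduce to the single bound $H^n_{Q_S}(F_q)_j=0$ for $j>t_0(F_q)-n$.
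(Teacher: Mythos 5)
Your argument is correct in substance, but it routes the chain of inequalities differently from the paper. The paper closes the single cycle $\reg(M)\le\reg_1(M)\le\reg_2(\FF)\le\reg_3(\FF)\le\reg(M)$: the first inequality is proved by decreasing induction on $g=\grade(Q_S,M)$ using the syzygy sequence $0\to M_1\to F_0\to M\to 0$ and the long exact sequence of Koszul homology, the second by the subquotient observation $H_i(Q_S,M)=H_i(\FF\otimes R_0)$ together with the vanishing $H_i(Q_S,M)=0$ for $i>n-g$, and the last is your step (a). Because $\reg_1$ sits inside the paper's cycle, no direct lower bound for $\reg_1$ is ever needed. Your cycle is $\reg_3\le\reg\le\reg_2\le\reg_3$, with $\reg(M)\le\reg_2(\FF)$ obtained from the hypercohomology spectral sequence (or, better, the dimension-shifting alternative you sketch, which avoids all convergence worries since each syzygy sequence contributes only the single nonvanishing module $H^n_{Q_S}(F_q)$); this forces you to prove $\reg_1(M)\ge\reg_3(\FF)$ separately by the Nakayama argument. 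What your route buys is a local-cohomological explanation of why the resolution can be truncated at homological degree $n-g$ in $\reg_2$, and an explicit proof that the extremal shift $t_0(F_{i^*})$ survives in $\Tor^S_{i^*}(M,R_0)$; what it costs is the extra machinery and the extra step that the paper's ordering of inequalities renders unnecessary.

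One caveat: the blanket reduction at the outset to $R_0$ local with infinite residue field is not legitimate for this statement as a whole, because localization need not preserve minimality of $\FF$, so the quantities $t_0(F_i)$, and hence $\reg_2(\FF)$ and $\reg_3(\FF)$, can drop after localizing; this is harmless in Lemma~\ref{mainLemma}, where one only bounds $t_0$ from above by a quantity that can only decrease, but here it would have to be justified separately for each inequality. In fact none of your steps except the final Nakayama argument uses the reduction, and that argument can be run over arbitrary $R_0$: with $j=t_0(F_{i^*})$ one has $[F_{i^*-1}\otimes R_0]_j=0$, so $\Tor^S_{i^*}(M,R_0)_j\cong [F_{i^*}/(\Omega_{i^*}+Q_SF_{i^*})]_j\cong[\Omega_{i^*-1}/Q_S\Omega_{i^*-1}]_j$, which is nonzero by the paper's intrinsic characterization of the degrees of minimal generators; no maximal ideal of $R_0$ is needed. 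With that repair the proof is complete.
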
 

\begin{proof}  
Set $Q=Q_S$ and $g=\grade(Q,M)=\min\{ i:  H^i_{Q}(M)\neq 0\}$. 

\vspace{2mm}

We first prove that $\reg(M)\leq \reg_1(M)$. 
 We prove the statement by decreasing induction on $g$.  Suppose  $g=n$. The induced map $H_Q^n(F_0)\to H_Q^n(M)$ is surjective. Hence   we have   
 $$\reg(M)\leq \reg(F_0)=t_0(F_0)=t_0(M)=\max\{ j : H_0(Q,M)_j\neq 0\}=\reg_1(M).$$
Now assume that $g<n$ and consider 
$$0\to M_1\to F_0\to  M\to 0.$$
 We have $\grade(Q,M_1)=g+1$ and 
 $$\reg(M)\leq \max\{ \reg(F_0), \reg(M_1)-1\}.$$  By induction 
 $\reg(M_1)\leq \reg_1(M_1)$. Since $H_i(Q,M_1)=H_{i+1}(Q,M)$ for $i>0$ and 
 $$0\to H_1(Q,M)\to H_0(Q,M_1)\to H_0(Q,F_0)\to H_0(Q,M)\to 0$$ is an exact sequence, we have 
 $$\reg_1(M_1)=\max\{ j-i : H_i(Q,M_1)_j\neq 0\}=\max\{a,b\}$$
 with $a= \max\{ j : H_0(Q,M_1)_j\neq 0\}$ and $b=\max\{ j-i : H_{i+1}(Q,M)_j\neq 0 \mbox{ and } i>0\}$. So $b\leq \reg_1(M)+1$ and, since $a\leq \max\{ t_0(F_0),  \max\{ j : H_1(Q,M)_j\neq 0\} \}$, we have that 
 $a\leq \reg_1(M)+1$ as well. Hence 
 $$\reg_1(M_1)\leq \reg_1(M)+1$$ 
and it follows that $\reg(M)\leq \reg_1(M).$

\vspace{2mm}

Secondly we prove that $\reg_1(M)\leq \reg_2(\FF)$. Since 
$$H_i(Q,M)=\Tor^S_i(M,R_0)=H_i(\FF  \otimes R_0)$$
we have that $H_i(Q,M)$ is a subquotient of $F_i\otimes R_0$ and hence 
$$\max\{ j : H_i(Q,M)_j\neq 0\}\leq t_0(F_i).$$ 
Furthermore,   $H_i(Q,M)=0$ if $i> n-g$. Therefore  $\reg_1(M)\leq \reg_2(\FF)$. 

\vspace{2mm}

That $\reg_2(\FF)\leq \reg_3(\FF)$ is obvious by definition, so ti remains to prove that  $\reg_3(\FF)\leq \reg(M)$.  Set $M_0=M$ and consider the exact sequence
$$0\to M_{i+1} \to F_i\to M_i \to 0. $$
By the minimality of $\FF$ we have $t_0(F_i)=t_0(M_i)\leq \reg(M_i)$. Hence 
$$\reg(M_{i+1})\leq \max\{ t_0(F_i), \reg(M_i)+1\}=\reg(M_i)+1$$
for all $i\geq 0$. It follows that 
$$t_0(F_i)=t_0(M_i)\leq \reg(M_i)\leq \reg(M)+i$$ 
for every $i$, that is, 
$$t_0(F_i)-i \leq \reg(M),$$
in other words,
$$\reg_3(\FF)\leq  \reg(M).$$
\end{proof} 

\begin{remark}\label{r:change}
Let $T\to R_0$ be any surjective homomorpism of unitary rings. It extends uniquely to 
$S'=T[X_1,\dots, X_n]\to S=R_0[X_1,\dots, X_n]$. Therefore a finitely generated  graded $R$-module $M$ can be regarded as a finitely generated  graded $S'$-module. Hence the regularity of  $M$ can be computed also using a graded minimal free resolution as $S'$-module. 
\end{remark}

\section{Bigraded Castelnuovo-Mumford regularity} 
\label{BigCM}
Assume now  $R=\bigoplus_{(i,j) \in \NN^2}  R_{(i,j)}$ is $\NN^2$-graded  with $R_{(0,0)}$  commutative and Noetherian and that  $R$ is  generated as an $R_{(0,0)}$-algebra by  elements $x_1,\dots, x_n, y_1,\dots, y_m$  with the $x_i$ homogeneous of degree $(1,0)$ and the $y_j$ homogeneous of degree $(0,1)$

We will denote by $R^{(*,0)}$ the subalgebra $\bigoplus_{i} R_{(i,0)}$ of $R$ and by $Q_{(1,0)}$ the ideal of $R^{(*,0)}$ generated by  $R_{(1,0)}$ i.e., by $x_1,\dots, x_n$. 
Similarly $R^{(0,*)}$ is the subalgebra $\bigoplus_{j} R_{(0,j)}$   
of $R$ and $Q_{(0,1)}$ the ideal of $R^{(0,*)}$ generated by  $R_{(0,1)}$ i.e., by $y_1,\dots, y_m$.  
We have (at least) three ways of getting an $\NN$-graded structure out of the  $\NN^2$-graded structure: 

\begin{itemize} 
\item[(1)] $(1,0)$-graded structure:  the homogeneous component of degree $i\in \NN$ is  given by $R^{(i,*)}=\bigoplus_{j} R_{(i,j)}$.  The degree $0$ part is $R^{(0,*)}$ and the ideal of the homogeneous elements of positive degree is  $Q_{(1,0)}R=(x_1,\dots, x_n)$.  
\item[(2)] $(0,1)$-graded structure:  the homogeneous component of degree $j\in \NN$ is  given by $R^{(*,j)}=\bigoplus_{i} R_{(i,j)}$.  The degree $0$ part is $R^{(*,0)}$ and the ideal of the homogeneous elements of positive degree is  $Q_{(0,1)}R=(y_1,\dots, y_m)$.  
\item[(3)] total degree: the homogeneous component of degree $u\in \NN$ is  $\bigoplus_{i+j=u} R_{(i,j)}$. The degree $0$ part is $R_{(0,0)} $ and the ideal of the homogeneous elements of positive degree is   $(x_1,\dots, x_n, y_1,\dots, y_m)$.
\end{itemize} 
In the same way,  any $\ZZ^2$-graded $R$-module  $M=\bigoplus M_{(i,j)}$  can be turned into a $\ZZ$-graded module by regrading it  with respect  to the $(1,0)$-grading or with respect the $(0,1)$-grading or with respect to the total degree.  

We may  hence consider the Castelnuovo-Mumford regularity of $M$ with respect to any of these different graded structures. To distinguish them we will denote by $\reg_{(1,0)} M$ the regularity of $M$ with respect to the $(1,0)$-graded structure and by $\reg_{(0,1)} M$ the regularity of $M$ with respect to the $(0,1)$-graded structure.

Given $i,j\in \ZZ$ we set $M^{(i,*)}=\bigoplus_v  M_{(i,v)}$ and $M^{(*,j)}=\bigoplus_v  M_{(v,j)}$. 
Clearly $M=\bigoplus_i  M^{(i,*)}$ as a $R^{(0,*)}$-graded module and $M=\bigoplus_j  M^{(*,j)}$ as an $R^{(*,0)}$-graded module. Also, it is simple to check that, if $M$ is a finitely generated $\ZZ^2$-graded module, then $M^{(i,*)}$ is a finitely generated $R^{(0,*)}$-graded module for all $i\in\ZZ$ and $M^{(*,j)}$ is a finitely generated $R^{(*,0)}$-graded module for all $j\in\ZZ$.

Let $S=R_{(0,0)}[X_1,\dots, X_n,Y_1,\dots,Y_m]$  with the $\NN^2$-graded structure induced by the assignment $\deg X_i=(1,0)$ and  $\deg Y_j=(0,1)$. We have: 

\begin{proposition}\label{p:bigraded} Let $M$ be a finitely generated $\ZZ^2$-graded $R$-module. Let $\FF$ be a bigraded $S$-free minimal resolution of $M$. Let $v_i$ be the largest integer $v$ such that $F_i$ has a minimal generator in degree $(v,*)$ and $w_i$ be the largest integer $w$ such that $F_i$ has a minimal generator in degree $(*,w)$ Then we have 
\begin{align*}
\max\{ \reg M^{(*,j)} : j\in \ZZ \}&= \reg_{(1,0)} M=  \max\{ v_i-i  : i=0,\dots, n\},\\
\max\{ \reg M^{(i,*)} : i\in \ZZ \}&=\reg_{(0,1)} M=   \max\{ w_i-i : i=0,\dots, m\},
\end{align*}
where $\reg M^{(*,j)}$ is the regularity as an $R^{(*,0)}$-graded module and $\reg M^{(i,*)}$ is the regularity as an $R^{(0,*)}$-graded module. 
\end{proposition}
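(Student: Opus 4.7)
The statement is symmetric under interchanging the two gradings, so it suffices to establish the first chain $\max_j \reg M^{(*,j)} = \reg_{(1,0)} M = \max\{v_i - i : i = 0,\dots,n\}$; the second chain follows by swapping the roles of the $X_i$'s and the $Y_j$'s.

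For the equality $\max_j \reg M^{(*,j)} = \reg_{(1,0)} M$, the plan is to observe that local cohomology with support in $Q_{(1,0)} R = (x_1,\dots,x_n)$ respects the $\ZZ^2$-grading. Indeed, it is computed by the \v{C}ech complex on $x_1,\dots,x_n$, and each localization $M_{x_v}$ remains bigraded because inverting $x_v$ does not alter the second coordinate of bidegree. Hence
$$H^i_{Q_{(1,0)}R}(M)_{(\ell,j)} \cong H^i_{Q_{(1,0)}}(M^{(*,j)})_\ell$$
for every bidegree $(\ell,j)$. The $(1,0)$-graded degree-$\ell$ piece of the left-hand cohomology is the direct sum over $j\in\ZZ$ of these pieces, so its nonvanishing is equivalent to the nonvanishing of $H^i_{Q_{(1,0)}}(M^{(*,j)})_\ell$ for some $j$. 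Taking the maximum of $i+\ell$ over this nonvanishing locus yields $\reg_{(1,0)} M = \sup\{\reg M^{(*,j)} : j\in\ZZ\}$, and the finiteness of the left-hand side upgrades the supremum to a maximum.

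For the equality $\reg_{(1,0)} M = \max\{v_i - i : i = 0,\dots,n\}$, the plan is to apply Theorem~\ref{t:main} via Remark~\ref{r:change}. The surjection $R_{(0,0)}[Y_1,\dots,Y_m] \twoheadrightarrow R^{(0,*)}$ turns $S$, equipped with its $(1,0)$-grading, into a standard $\NN$-graded polynomial ring in $X_1,\dots,X_n$ over the base $R_{(0,0)}[Y_1,\dots,Y_m]$. By Remark~\ref{r:change}, $\reg_{(1,0)} M$ is computed by any $(1,0)$-minimal graded $S$-free resolution of $M$. The key observation is that the bigraded minimal resolution $\FF$ is already $(1,0)$-minimal: if a bihomogeneous basis element $e_{k_0}$ of $F_i$ in bidegree $(a_{k_0},b_{k_0})$ were a $(1,0)$-graded $S$-linear combination of the other basis elements, extracting the bidegree-$(a_{k_0},b_{k_0})$ component of such an expression would produce a bigraded combination of the others equal to $e_{k_0}$, contradicting bigraded minimality of $\FF$. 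Hence the largest $(1,0)$-degree of a minimal generator of $F_i$ equals $v_i$, and Theorem~\ref{t:main} gives $\reg_{(1,0)} M = \max\{v_i - i : i \ge 0\}$; the identification $\reg_2(\FF) = \reg_3(\FF)$ then cuts the range down to $0 \le i \le n$.

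The main subtlety is verifying that bigraded minimality carries over to $(1,0)$-minimality. Although elementary, this step is essential: without it, $\FF$ would only yield the inequality $\reg_{(1,0)} M \le \max\{v_i - i\}$, and the reverse inequality would have to be argued either by constructing a genuinely $(1,0)$-minimal resolution or via an additional local cohomology argument.
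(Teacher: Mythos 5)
Your proposal is correct and follows essentially the same route as the paper: the first equality via the bigraded decomposition of local cohomology supported on $(x_1,\dots,x_n)$, and the second via Theorem~\ref{t:main} combined with Remark~\ref{r:change} applied to the surjection $R_{(0,0)}[Y_1,\dots,Y_m]\to R^{(0,*)}$. The only difference is that you spell out the (correct) verification that a bigraded minimal resolution remains minimal for the $(1,0)$-grading, a point the paper leaves implicit.
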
 

\begin{proof} Set $Q=Q_{(1,0)}$, i.e. $Q$ is  the ideal of $R^{(*,0)}$ generated by  $R_{(1,0)}$. The $(1,0)$-regularity of $M$ is defined by means of  the local cohomology  $H_{QR}^*(M)$. We may regard $M$ as an $R^{(*,0)}$-module, so that $H_{QR}^c(M)=H_{Q}^c(M)=\bigoplus_j H_{Q}^c(M^{(*,j)})$ for all $c$. This explains the first equality. For the second equality,  by Theorem \ref{t:main}  $\reg_{(1,0)} M$ can be computed from any graded minimal  free resolution of $M$ as an $R^{(0,1)}[X_1,\dots, X_n]$-module but we have observed in Remark \ref{r:change} that it can be as well computed from any minimal  free resolution of $M$ as an $S$-module. So a minimal bigraded resolution of $M$ as $S$-modules serves to compute both the $(1,0)$ and the $(0,1)$ regularity. 
\end{proof}

\section{A non-standard $\ZZ^2$-grading}
\label{nonstan}
For later applications we will consider in this section a polynomial ring 
$$A= A_0[Y_1,\dots, Y_g]$$
over a ring $A_0$ with a (non-standard)   $\ZZ^2$-graded structure given by 
$$\deg Y_j=(d_j,1)$$
where $d_1,\dots, d_g\in \NN$. 
 
For every  $\ZZ^2$-graded $A$-module $N=\bigoplus N_{(i,v)}$ and for every $v\in \ZZ$ we set 
$$\rho_N(v)=\sup\{ i\in \ZZ  : N_{(i,v)}\neq 0\}\in \ZZ\cup\{\pm \infty\}.$$
We will study the behaviour of  $\rho_N(v)$ as a function of $v$.  
We start with two general facts. 
\begin{lemma} 
\label{easyrho}
Given an chain of submodules $0=N_0\subset N_1 \subset N_2  \subset \dots \subset N_p=N$ of $\ZZ^2$-graded $A$ modules one has $\rho_N(v)=\max\{ \rho_{N_i/N_{i-1}}(v) : i=1,\dots, p\}$ for all $v$.  
\end{lemma}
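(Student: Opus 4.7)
The plan is to reduce the chain statement to the two-step case via induction, and handle the two-step case by looking at graded components.

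First I would verify the case $p=2$, i.e., that for any short exact sequence $0 \to N' \to N \to N'' \to 0$ of $\ZZ^2$-graded $A$-modules with degree-zero maps one has
$$\rho_N(v) = \max\{\rho_{N'}(v),\ \rho_{N''}(v)\}.$$
Since the sequence is $\ZZ^2$-graded, for every fixed bidegree $(i,v)$ we have an exact sequence of $A_{(0,0)}$-modules
$$0 \to N'_{(i,v)} \to N_{(i,v)} \to N''_{(i,v)} \to 0.$$
Hence $N_{(i,v)} \neq 0$ if and only if at least one of $N'_{(i,v)}$, $N''_{(i,v)}$ is nonzero: the "if" direction follows from the fact that the left map is injective and the right map is surjective, and the "only if" direction follows because if both end terms vanish then the middle does as well. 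Taking the supremum over $i$ of the indicator "$N_{(i,v)}\neq 0$" now yields the claimed equality, interpreted in $\ZZ\cup\{\pm\infty\}$ (with $\rho(v) = -\infty$ when the fibre is zero in every $i$).

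Given the two-term case, I would finish by induction on $p$. For $p=1$ the statement is trivial. For the inductive step, apply the two-term case to
$$0 \to N_{p-1} \to N_p \to N_p/N_{p-1} \to 0$$
to obtain $\rho_{N_p}(v) = \max\{\rho_{N_{p-1}}(v),\ \rho_{N_p/N_{p-1}}(v)\}$, and then apply the induction hypothesis to the chain $0=N_0\subset N_1\subset\cdots\subset N_{p-1}$ to rewrite $\rho_{N_{p-1}}(v)$ as $\max\{\rho_{N_i/N_{i-1}}(v) : i=1,\dots,p-1\}$. Combining gives the desired identity.

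There is no real obstacle: the only point to be careful about is bookkeeping with the convention $\rho_N(v)\in \ZZ\cup\{\pm\infty\}$, so that the formula makes sense even when some factors vanish in the $v$-strip (in that case they contribute $-\infty$ and do not affect the maximum). No hypothesis beyond $\ZZ^2$-gradedness of the filtration is used.
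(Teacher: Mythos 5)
Your proposal is correct and follows essentially the same route as the paper: the paper's proof simply notes that $\rho$ behaves well on short exact sequences with degree-zero maps and then inducts on $p$ using the short exact sequences coming from the chain, which is exactly what you do (you just spell out the componentwise verification of the two-term case that the paper leaves implicit). No issues.
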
 
\begin{proof}  The function $\rho_N(v)$ behaves well on short exact sequences with maps of degree $0$. Then the statement follows by induction on $p$ using the short exact sequences associated to the chain of submodules.  
\end{proof} 

Let $F$ be a finitely generated  $\ZZ^2$-graded  free $A$-module with basis $e_1,\dots, e_p$ and let $<$  be a monomial order  on $F$. 
For every  $\ZZ^2$-graded $A$-submodule $U$ of $F$ we denote by $\ini_<(U)$ the $A_0$-submodule of $F$ generated by leading monomials (with coefficients!) of the non-zero elements in $U$.  Since $U$ is an $A$-submodule  of $F$, it turns out that $\ini_<(U)$ is an $A$-submodule of $F$ as well.  Furthermore for every monomial $aY^\alpha e_i$ in $\ini_<(U)$ there exists an element $u\in U$ such that $\ini_<(u)=aY^\alpha e_i$. 
One has: 

\begin{lemma}  
$\rho_{F/U}(v)=\rho_{F/\ini_<(U)}(v)$ for all $v$.
\end{lemma}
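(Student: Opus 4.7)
The plan is to reduce the claim to a statement in each individual bidegree and argue directly. Since $U$ is $\ZZ^2$-graded, so is $\ini_<(U)$---the leading monomial of a bihomogeneous element of $U$ is itself bihomogeneous of the same bidegree, and each bihomogeneous component of any $u \in U$ lies in $U$---so it suffices to show that for every bidegree $(i,v)$, $F_{(i,v)} = U_{(i,v)}$ if and only if $F_{(i,v)} = \ini_<(U)_{(i,v)}$. Because $\deg Y_j = (d_j,1)$, fixing $v$ bounds $|\alpha|$, and so there are only finitely many monomials $m_1 > m_2 > \cdots > m_N$ of bidegree $(i,v)$ in $F$; the free $A_0$-module $F_{(i,v)}$ is their $A_0$-span.

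Since the $m_k$'s are $A_0$-linearly independent and $\ini_<(U)_{(i,v)}$ is the $A_0$-span of the leading terms of bihomogeneous elements of $U_{(i,v)}$ (each such leading term being a scalar multiple of exactly one $m_k$), one obtains a decomposition
\[
\ini_<(U)_{(i,v)} = \bigoplus_{k=1}^N L_k \cdot m_k,
\]
where $L_k \subseteq A_0$ is the ideal consisting of $0$ together with the coefficients $a$ for which $a \cdot m_k = \ini_<(u)$ for some $u \in U_{(i,v)}$.

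The direction $F_{(i,v)} = U_{(i,v)} \Rightarrow F_{(i,v)} = \ini_<(U)_{(i,v)}$ is immediate: each $m_k$ then lies in $U$, whence $m_k = \ini_<(m_k) \in \ini_<(U)$. Conversely, if $F_{(i,v)} = \ini_<(U)_{(i,v)}$, then $L_k = A_0$ for every $k$, so there exist $u_k \in U_{(i,v)}$ with $\ini_<(u_k) = m_k$. Given any $f \in F_{(i,v)}$, writing $f = a\,m_k + (\text{terms below } m_k)$ and replacing $f$ by $f - a\,u_k$ strictly decreases its leading monomial; since there are only $N$ monomials, this division process terminates in finitely many steps with $0$, yielding $f \in U_{(i,v)}$. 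The delicate point is that $A_0$ is not assumed to be a field, so one cannot argue by comparing ranks as in the classical Macaulay argument; the key substitute is to track the ideals $L_k$ of leading coefficients and observe that $L_k = A_0$ is exactly what is needed to run the division algorithm.
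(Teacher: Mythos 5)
Your proof is correct and follows essentially the same route as the paper: reduce to the claim that $U_{(i,v)}=F_{(i,v)}$ if and only if $\ini_<(U)_{(i,v)}=F_{(i,v)}$, and then use that each monomial of degree $(i,v)$ occurs as a leading term with unit coefficient to descend along the monomial order. Your explicit division algorithm with the leading-coefficient ideals $L_k$ is just a repackaging of the paper's minimal-counterexample argument (and of its preliminary observation that every monomial in $\ini_<(U)$ is the leading term of some element of $U$), so there is no substantive difference.
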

\begin{proof}  It is enough to prove that, given $(i,v)$, one has  $U_{(i,v)}=F_{(i,v)}$  if and only if $\ini_<(U_{(i,v)})=F_{(i,v)}$ 
The ``only if" implication is obvious. For the ``if" implication, we argue by contradiction. Suppose $\ini_<(U_{(i,v)})=F_{(i,v)}$ and $U_{(i,v)} \not =F_{(i,v)}$.  Let $Y^\alpha e_i$ be the smallest (with respect to the monomial order) monomial of degree $(i,v)$ which is not in $U_{(i,v)}$. Since $Y^\alpha e_i\in \ini_<(U_{(i,v)})$ 
there exists  $u\in U$ such that  $\ini_<(u)=Y^\alpha e_i $. We may assume that $u$ is homogeneous of degree $(i,v)$. If not, we simply replace $u$ with the homogeneous component of $u$ of degree $(i,v)$ which is in $U$ since $U$ is graded.  So we have $u= Y^\alpha e_i +u_1$ where $u_1$ is a $A_0$-linear combination of monomials of degree $(i,v)$ that are $<  Y^\alpha e_i$. Hence, by assumption, $u_1\in U_{(i,v)}$.  It follows that 
 $Y^\alpha e_i=u-u_1\in  U_{(i,v)}$, a contradiction. 
 \end{proof} 

The fact that $A$ has no elements of degree $(i,0)\in \ZZ^2$  with $i\neq 0$ has  an important consequence. 

\begin{lemma} 
\label{easy}
Let  $N$ be a  $\ZZ^2$-graded and finitely generated $A$-module.  Then   $\rho_N(v)$ is eventually either  a linear function of $v$ with leading coefficient in $\{d_1,\dots, d_g\}$ or  $-\infty$. 
\end{lemma}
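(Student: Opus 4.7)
The plan is to induct on the number of variables $g$. The base case $g=0$ is immediate: $A=A_0$ has all elements in bidegree $(0,0)$, so any finitely generated bigraded $A_0$-module is supported on only finitely many bidegrees, giving $\rho_N(v)=-\infty$ for all sufficiently large $v$.

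For the inductive step, I would first isolate the $Y_g$-torsion $T=(0:_N Y_g^\infty)$. By Noetherianity the chain $\{(0:_N Y_g^k)\}$ stabilizes, and each successive quotient is annihilated by $Y_g$, hence is a finitely generated module over $A/(Y_g)\cong A_0[Y_1,\ldots,Y_{g-1}]$. Applying the inductive hypothesis to each quotient and combining them via Lemma~\ref{easyrho}, I conclude that $\rho_T$ is eventually linear with slope in $\{d_1,\ldots,d_{g-1}\}$ or eventually $-\infty$. The exact sequence $0\to T\to N\to N'\to 0$ yields $\rho_N(v)=\max(\rho_T(v),\rho_{N'}(v))$ in each bidegree, so it suffices to analyse the $Y_g$-torsion-free quotient $N':=N/T$.

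For $N'$, since $Y_g$ acts injectively, the submodule $Y_gN'\subseteq N'$ is isomorphic to $N'(-d_g,-1)$, and therefore $\rho_{Y_gN'}(v)=d_g+\rho_{N'}(v-1)$. Using this in the short exact sequence $0\to Y_gN'\to N'\to N'/Y_gN'\to 0$ yields the recursion
\[
\rho_{N'}(v)=\max\bigl(\rho_{N'/Y_gN'}(v),\; d_g+\rho_{N'}(v-1)\bigr).
\]
Since $N'/Y_gN'$ is a finitely generated module over $A_0[Y_1,\ldots,Y_{g-1}]$, by induction $h(v):=\rho_{N'/Y_gN'}(v)$ is eventually linear with slope in $\{d_1,\ldots,d_{g-1}\}$ or eventually $-\infty$. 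Writing $f(v)=\rho_{N'}(v)$ and $\bar f(v)=f(v)-d_gv$, the recursion reads $\bar f(v)=\max(h(v)-d_gv,\;\bar f(v-1))$, so $\bar f$ is non-decreasing and integer-valued. (Assuming $N'\neq 0$, picking a nonzero homogeneous element and multiplying by powers of $Y_g$ shows that $f(v)$, hence $\bar f(v)$, is finite for all large $v$.)

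A short case analysis then finishes the argument. If the slope of $h$ is at most $d_g$ (or $h$ is eventually $-\infty$), then $h(v)-d_gv$ is bounded above, so the non-decreasing integer-valued sequence $\bar f$ is bounded and therefore eventually constant; hence $f$ is eventually linear with slope $d_g$. If instead the slope of $h$ exceeds $d_g$, then $h(v)-d_gv$ is strictly increasing and eventually dominates $\bar f(v-1)$, so $\bar f(v)=h(v)-d_gv$ eventually, i.e., $f(v)=h(v)$ eventually, which is linear with slope in $\{d_1,\ldots,d_{g-1}\}$. In either case $f$, and hence $\rho_N$, is eventually linear with slope in $\{d_1,\ldots,d_g\}$ or eventually $-\infty$. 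The main obstacle is identifying the right decomposition --- torsion and torsion-free with respect to $Y_g$ --- so as to produce the clean recursion above; once that is in place, the case analysis on the comparison between $d_g$ and the inductively known slope of $h$ is routine.
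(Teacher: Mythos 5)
Your proof is correct, but it takes a genuinely different route from the paper. The paper presents $N$ as $F/U$, passes to an initial module via a monomial order (using the preceding lemma that $\rho_{F/U}=\rho_{F/\ini_<(U)}$), filters the resulting monomial quotient by cyclic modules $A/P$ with $P=pA+J$ a ``monomial prime'', and then computes $\rho_{A/P}$ explicitly, reading off the slope as $\max\{d_i : Y_i\notin P\}$. You instead induct on the number of variables $g$, splitting off the $Y_g$-torsion $T$ (handled by the inductive hypothesis over $A_0[Y_1,\dots,Y_{g-1}]$ via Lemma~\ref{easyrho}) and extracting from the torsion-free quotient the recursion $\rho_{N'}(v)=\max\bigl(\rho_{N'/Y_gN'}(v),\,d_g+\rho_{N'}(v-1)\bigr)$, which you solve by a monotonicity-plus-case-analysis argument. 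Your approach avoids Gr\"obner deformation and prime filtrations altogether, which makes it more self-contained; the paper's approach is shorter given the two preparatory lemmas and has the advantage of exhibiting the eventual linear function explicitly on each cyclic piece. One small point you should patch: your argument needs $\rho_{N'}(v)<+\infty$ (so that $\bar f$ takes finite values and ``non-decreasing and bounded above'' implies ``eventually constant''), but your parenthetical only establishes $\rho_{N'}(v)>-\infty$. The upper bound is the first observation in the paper's proof: if $N$ is generated in degrees $(\alpha_k,\beta_k)$ and $D=\max\{d_1,\dots,d_g\}$, then $N_{(i,v)}\neq 0$ forces $i\leq (v-\beta_k)D+\alpha_k$ for some $k$, so $\{i : N_{(i,v)}\neq 0\}$ is finite for each $v$. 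Adding that line makes your argument complete.
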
 

\begin{proof}  
First we observe that if $n$ is a generator of $N$ of degree,  say,  $(\alpha,\beta)\in \ZZ^2$,  then 
$Y_1^{\alpha_1}\cdots Y_g^{\alpha_g}n$ has degree $(\sum_j \alpha_j d_j+\alpha, \sum_j \alpha_j+\beta)$. Hence $N_{(i,v)}$ is non-zero only if $(i,v)=(\sum_j \alpha_j d_j+\alpha, \sum_j \alpha_j+\beta)$  for some $(\alpha_1,\dots, \alpha_g)\in \NN^g$ and some $(\alpha,\beta)$ degree of a minimal generator of $N$. If we set $D= \max\{d_1,\dots, d_g\}$, then $N_{(i,v)}\neq 0$ implies $\alpha\leq i\leq (v-\beta)D+\alpha$  for some degree $(\alpha,\beta)$  of a minimal generator of $N$.  As the module $N$  is finitely generated, it follows that  $\{ i\in \ZZ :  N_{(i,v)}\neq 0\}$ is finite for every $v\in \ZZ$.  
To prove that $\rho_N(v)$ is  either eventually linear in $v$ or $-\infty$, we present $N$ as $F/U$ where $F$ is a finitely generated  $A$-free bigraded module and   $U$ is a bigraded $A$-submodule of $F$.  Let $<$ be a monomial order  on $F$. Then $\rho_{F/U}(v)= \rho_{F/\ini_<(U)}(v)$. Hence  we may assume right away that $U$ is generated by monomials (with coefficients). 
We can consider a bigraded  chain of submodules 
$$0=N_0\subset N_1 \subset N_2  \subset \dots \subset N_p=N$$ with  cyclic  quotients
$C_i=N_i/N_{i-1}$  annihilated by a monomial prime ideal, i.e., an  ideal of the form $pA+J$ where $p$ is a prime ideal of $A_0$ and $J$ is an ideal generated by a subset of the variables $Y_1,\dots, Y_g$. 
It follows that 
$$
\rho_N(v)=\max\{ \rho_{C_i}(v) : i=1,\dots, p\}.
$$
Since the maximum of finitely many eventually linear  functions in one variable is an eventually linear function, it is enough to prove the statement for each $C_i$. That is, we may assume that, up to a shift $(-w_1,-w_2)\in \ZZ^2$, the module   $N$ has the form 
$A/P$ with $P=pA+J$ where $p$ is a prime ideal of $A_0$ and $J$ is generated by a subset of the variables.  With  $G=\{ i : Y_i\not\in P\}$, we have 
 $$\rho_N(v)=
\left\{\begin{array}{ll}
\max\{ d_i : i\in G\} (v-w_2)+w_1  & \mbox{  if  } G \neq \emptyset \mbox{ and  }   v\geq w_2, \\
-\infty   &\mbox{  if  } G=\emptyset \mbox{ and  }   v> w_2. \\  
\end{array}
\right.
$$
\end{proof}

\section{Regularity and powers} 
We return to the notation of Section \ref{CM}.  For a finitely generated graded $R$-module $M$ 
and a homogeneous ideal $I$ of $R$ we will study the behaviour of $\reg(I^vM)$ as a function of $v\in \NN$. 
For simplicity we will assume throughout that $I^vM\neq 0$ for every $v$. 
Let us consider the Rees algebra $\Rees(I)$ of $I$:

$$\Rees(I)=\bigoplus_{v\in \NN} I^v$$
with its natural bigraded structure given by 

$$\Rees(I)_{(i,v)}=(I^v)_i.$$
The Rees module of the pair $I,M$

$$\Rees(I,M)=\bigoplus_{v\in \NN} I^vM$$
 is clearly a finitely generated $\Rees(I)$-module naturally bigraded by
$$\Rees(I,M)_{(i,v)}=(I^vM)_i.$$
Let $f_1,\dots, f_g$  be a set of minimal homogeneous generators of $I$ of degrees, say, $d_1,\dots, d_g\in \NN$.
We may present  $\Rees(I)$ as a quotient of 
$$B=R[Y_1,\dots, Y_g]$$
via the map 
$$\psi:B\to \Rees(I), \quad Y_i\to f_i \in I_{d_i}=\Rees(I)_{(d_i,1)}.$$
Actually $B$ is naturally bigraded if we assign bidegree $(i,0)$  to $x\in R_i$ as an element of $B$ and 
by set $\deg Y_j=(d_j,1)$. 

Consider the extension $Q_RB$ of  $Q_R$ to $B$ and the Koszul homology 
$$H(Q_RB, \Rees(I,M))=H(Q_R, \Rees(I,M))=\bigoplus_{v\in \NN} H(Q_R,I^vM).$$
Since $Q_R H(Q_R, \Rees(I,M))=0$  $H(Q_R, \Rees(I,M))$ acquires naturally the structure of finitely generated 
$\ZZ^2$-graded $B/Q_RB$-module.  Here  
$$B/Q_RB=R_0[Y_1,\dots, Y_g]$$ 
has a  bigraded structure defined in Section \ref{nonstan}. 
Now for $i=0,\dots,n$ we let 
$$t_i(M)=\sup\{ j : H_i(Q_R, M)_j\neq 0\}.$$ 
 We have: 
 
 \begin{theorem}
 \label{main}
  Let $I$ be a  homogeneous ideal of $R$  minimally generated by  homogeneous elements of degree $d_1,\dots, d_g$ and $M$ be a finitely generated graded $R$-module. 
 Then there exist $\delta\in \{ d_1,\dots, d_g\}$ and  $c\in \ZZ$ such that   
  $$\reg(I^vM)=\delta v+c  \mbox{ for } v\gg 0.$$
  \end{theorem}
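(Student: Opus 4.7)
My plan is to combine Theorem~\ref{t:main} (the Koszul description of regularity) with Lemma~\ref{easy} (eventual linearity of $\rho_N(v)$) applied to the total Koszul homology of $\Rees(I, M)$. By Theorem~\ref{t:main} one has
$$\reg(I^v M) = \max\{ t_i(I^v M) - i : 0 \leq i \leq n\},$$
so it suffices to analyze each of the finitely many functions $v \mapsto t_i(I^v M)$. I package these across all powers into a single bigraded object as follows: view $\Rees(I, M)$ as a finitely generated bigraded module over $B = R[Y_1, \dots, Y_g]$, with $\deg Y_j = (d_j, 1)$ and $\deg x_k = (1, 0)$. Since $x_1, \dots, x_n$ all have second degree $0$, the Koszul complex on $\Rees(I, M)$ respects the second grading and yields
$$H_i(Q_R, \Rees(I, M)) = \bigoplus_{v \in \NN} H_i(Q_R, I^v M),$$
in which the bidegree $(j,v)$ component is exactly $H_i(Q_R, I^v M)_j$. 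In particular $\rho_{H_i(Q_R, \Rees(I,M))}(v) = t_i(I^v M)$.

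Next, since $Q_R$ annihilates its own Koszul homology, the $B$-action on $H_i(Q_R, \Rees(I, M))$ factors through
$$A := B/Q_R B = R_0[Y_1, \dots, Y_g], \quad \deg Y_j = (d_j, 1),$$
which is precisely the non-standard $\ZZ^2$-graded ring of Section~\ref{nonstan}. As $\Rees(I,M)$ is finitely generated over the Noetherian ring $B$, so is $H_i(Q_R, \Rees(I, M))$, and hence it is a finitely generated bigraded $A$-module. With this identification, Lemma~\ref{easy} applies directly: for each $i$, the function $v \mapsto t_i(I^v M)$ is either eventually linear in $v$ with slope in $\{d_1, \dots, d_g\}$ or eventually $-\infty$. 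Subtracting the constant $i$ preserves slopes, and the maximum of finitely many such functions is again of the same form. The case in which the maximum is eventually $-\infty$ is ruled out by the hypothesis $I^v M \neq 0$, which together with Lemma~\ref{mainLemma} forces $\reg(I^v M) \geq t_0(I^v M)$ to be a finite integer for every $v$. This yields $\reg(I^v M) = \delta v + c$ for $v \gg 0$ with $\delta \in \{d_1, \dots, d_g\}$.

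The main obstacle I foresee is the conceptual step of recognizing $H_i(Q_R, \Rees(I,M))$ as a finitely generated module over exactly the ring $A$ of Section~\ref{nonstan}, carrying precisely the bigrading that Lemma~\ref{easy} requires; once this identification is in place, all the analytic work---eventual linearity and membership of the slope in the prescribed finite set---has already been carried out in Sections~\ref{CM} and~\ref{nonstan}.
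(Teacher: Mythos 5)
Your proposal is correct and follows essentially the same route as the paper: both pass to the Koszul homology of the Rees module, view each $H_i(Q_R,\Rees(I,M))$ as a finitely generated bigraded module over $R_0[Y_1,\dots,Y_g]$ with $\deg Y_j=(d_j,1)$, and invoke Lemma~\ref{easy} together with the identity $\reg(I^vM)=\max\{t_i(I^vM)-i\}$ from Theorem~\ref{t:main}. Your explicit remark that the hypothesis $I^vM\neq 0$ rules out the all-$-\infty$ case is a small but welcome clarification that the paper leaves implicit.
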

  
  \begin{proof} 
  For  $i=0,\dots,n$ consider the $i$-th Koszul homology module: 
  $$H_i=H_i(Q_R, \Rees(I,M))=\bigoplus_{v\in \NN} H_i(Q_R,I^vM).$$
  As already observed $H_i$ is a finitely generated $\ZZ^2$-graded  $B/Q_RB$-module. 
 Furthermore $\rho_{H_i}(v)=t_i(I^vM)$.  Therefore  we may apply \ref{easy} and have that either 
 $H_i(Q_R,I^vM)=0$ for large $v$ or $t_i(I^vM)$ is a linear function of $v$ for large $v$ with leading coefficient in  $\{ d_1,\dots, d_g\}$. As $\reg(I^vM)=\max\{ t_i(I^vM)-i : i=0,\dots, n\}$ we may conclude that $\reg(I^vM)$ is eventually a linear function in $v$ with leading coefficient in $\{ d_1,\dots, d_g\}$.
 \end{proof} 
 
 Theorem \ref{main} has been proved in \cite{CHT} and \cite{K} when $R$ is a polynomial ring over a field and in \cite{TW} for general base rings. Our proof is a modification (and a slight simplification) of the one given in  \cite{CHT}. Here and also  in Section \ref{BigCM}  our work was  largely inspired by  the papers of  Chardin on the subject, in particular by \cite{Ch1,Ch2,Ch3,Ch4}.  The $\delta$ appearing in  \ref{main} can be characterized in terms of minimal reductions, see \cite{K,TW} for details. The nature of the others invariants arising from \ref{main}, i.e., the constant term $c$ and the least $v_0$ such that the formula holds for each $v\geq v_0$,  have been  deeply  investigated in \cite{BCH,  Ch2, Ch4, EH, EU}  and is relatively well understood in small dimension but remains largely unknown in general.

 \section{Linear powers}
  Assume now that  the minimal generators of  $I$ have all degree $d$ and that  the minimal generators of  $M$ have all degree $d_0$. 
 Hence $I^vM$ is generated by elements of degree $vd+d_0$ and therefore  $\reg(I^vM)\geq vd+d_0$ for every $v$.   
 
 \begin{definition} 
 We say that $I$ has linear powers with respect to $M$ if  $\reg(I^vM)=vd+d_0$ for every $v$.   
\end{definition} 

When $R_0$ is a field, $I$ has linear powers with respect to $M$  if and only if for every $v$ the matrices representing the maps in the minimal $S$-free resolution of $I^vM$ have entries of degree $1$. 

We will give a characterization of linear powers in terms of the homological properties of the Rees module $\Rees(I,M)$. Note that, under the current assumptions, $\Rees(I)$ and $\Rees(I,M)$ can be given a compatible and ``normalized" $\ZZ^2$-graded structure: 
\begin{align*}
\Rees(I)_{(i,v)}&=(I^v)_{vd+i},\\
\Rees(I,M)_{(i,v)}&=(I^vM)_{vd+d_0+i}.
\end{align*}
From the presentation point of view, this amounts to set $\deg Y_i=(0,1)$ so that 
$B=R[Y_1,\dots, Y_g]$ is a $\ZZ^2$-graded $R_0$-algebra with generators in degree $(1,0)$, the elements of $R_1$,  and in degree $(0,1)$, the $Y_i$'s.  With the notations introduced in Section \ref{BigCM}, we have that $\Rees(I,M)^{(*,v)}=(I^vM)(vd+d_0)$. So, applying Proposition \ref{p:bigraded}: 
$$\reg_{(1,0)} \Rees(I,M)=\max\{ \reg \Rees(I,M)^{(*,v)} : v\in \NN\}=\max\{ \reg I^vM-vd-d_0 : v\in \NN \}.$$
Summing up we have:

\begin{theorem} 
\label{linearpow}  
\begin{itemize} 
\item[(1)] $\reg I^vM \leq vd+d_0+\reg_{(1,0)} \Rees(I,M)$ for all $v$ and the equality holds for at least one $v$. 
\item[(2)] $I$ has linear powers with respect to $M$ if and only if $\reg_{(1,0)} \Rees(I,M)=0$. 
\end{itemize}
\end{theorem}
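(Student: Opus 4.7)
The proof proposal really just packages the computation carried out in the paragraph immediately preceding the theorem statement. My plan is as follows.

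The first step is to record the key formula that has just been derived from Proposition \ref{p:bigraded}, namely
$$\reg_{(1,0)} \Rees(I,M) \;=\; \max_{v\in\NN}\bigl\{\reg(I^vM) - vd - d_0\bigr\}.$$
This rests on two ingredients already in place: (a) the identification $\Rees(I,M)^{(*,v)} = (I^vM)(vd+d_0)$, which combined with property (1) of regularity gives $\reg \Rees(I,M)^{(*,v)} = \reg(I^vM) - vd - d_0$; and (b) Proposition \ref{p:bigraded} applied to $\Rees(I,M)$ viewed as a finitely generated bigraded module over the algebra $B$ (equivalently, over the polynomial extension $S' = R_0[X_1,\dots,X_n,Y_1,\dots,Y_g]$ via Remark \ref{r:change}).

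From this formula, part (1) is immediate bookkeeping: each entry in the maximum is at most the maximum, yielding the inequality $\reg(I^vM) \leq vd + d_0 + \reg_{(1,0)}\Rees(I,M)$ for every $v$. Since $\Rees(I,M)$ is finitely generated and bigraded, $\reg_{(1,0)}\Rees(I,M)$ is a finite integer, so the maximum is attained for at least one value of $v$, which gives equality there.

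For part (2), the additional ingredient I need is Lemma \ref{mainLemma}. Since $I^vM$ is generated in the single degree $vd+d_0$, one has $t_0(I^vM) = vd+d_0$, so Lemma \ref{mainLemma} yields $\reg(I^vM) \geq vd + d_0$ for every $v$. Hence every term $\reg(I^vM) - vd - d_0$ appearing in the maximum is nonnegative, so the maximum equals $0$ precisely when all terms vanish, i.e., $\reg(I^vM) = vd + d_0$ for every $v$. This is the definition of $I$ having linear powers with respect to $M$.

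There is no real obstacle here: the substantive content (Proposition \ref{p:bigraded} and the identification of the slices $\Rees(I,M)^{(*,v)}$ as shifts of $I^vM$) is already proved, and the theorem reduces to transcribing the maximum formula. If anything deserves care, it is the observation that $\reg_{(1,0)}\Rees(I,M)$ is finite (needed to ensure the maximum is attained in the equality clause of (1)), which follows from the finite generation of the Rees module and Proposition \ref{p:bigraded}.
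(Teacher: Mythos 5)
Your proposal is correct and follows essentially the same route as the paper: the theorem is exactly the ``summing up'' of the displayed formula $\reg_{(1,0)}\Rees(I,M)=\max\{\reg I^vM-vd-d_0 : v\in\NN\}$ obtained from Proposition \ref{p:bigraded}, combined with the lower bound $\reg(I^vM)\geq vd+d_0$ already noted at the start of the section. Your extra remark on the finiteness of $\reg_{(1,0)}\Rees(I,M)$ (needed for the maximum to be attained) is a sensible point of care, but otherwise the argument coincides with the paper's.
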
 

When $R$ is the polynomial ring over a field and $M=R$ Theorem \ref{linearpow} part (2)   has been proved in \cite{BCV} extending earlier results of R\"omer \cite{R}. 

Theorem \ref{linearpow} and Theorem \ref{main}  have been  generalized to the case where the single ideal $I$ is replaced with a set of ideals $I_1,\dots, I_p$ and one looks at the regularity $\reg(I_1^{v_1}\cdots I_p^{v_p}M)$ as a function of $(v_1,\dots, v_p)\in \NN^p$. The main difference is that $\reg(I_1^{v_1}\cdots I_p^{v_p}M)$ is (only) a piecewise linear function unless each ideal $I_i$ is   generated in a single degree, see \cite{BC1, BC2, G} for details.

 \end{document}